\title{Equivalent Birational Embeddings III: cones}
\author{Massimiliano Mella}
\address{Dipartimento di Matematica e Informatica\\
Universit\`a di Ferrara\\
Via Machiavelli 35\\
44100 Ferrara, Italia} \email{mll@unife.it}
\date{October 2012}
\subjclass{Primary 14E25 ; Secondary 14E05, 14N05, 14E07}
\keywords{Birational maps; Cremona equivalence; embeddings;
hypersurfaces}
\thanks{Dedicated  to Alberto Conte in
  the occasion of his $70^{\rm th}$ birthday.}
\theoremstyle{plain}
\newtheorem{Th}{Theorem}
\newtheorem{theorem}{Theorem}[section]
\newtheorem{proposition}[theorem]{Proposition}
\newtheorem{lemma}[theorem]{Lemma}
\newtheorem{corollary}[theorem]{Corollary}
\theoremstyle{definition}
\newtheorem{definition}[theorem]{Definition}
\theoremstyle{remark}
\newtheorem{remark}[theorem]{Remark}
\DeclareMathOperator{\mult}{mult}
\newcommand{\QED}{\ifhmode\unskip\nobreak\fi\quad {\rm Q.E.D.}} 
\newcommand\map{\dasharrow}
\newcommand\iso{\cong}
\newcommand{\f}{\varphi}
\renewcommand{\H}{\mathcal{H}}
\newcommand{\I}{\mathcal{I}}
\renewcommand{\O}{\mathcal{O}}
\renewcommand{\P}{\mathbb{P}}
\newcommand{\Q}{\mathbb{Q}}
\newcommand{\rat}{\dasharrow}
\begin{document}

\begin{abstract} Two divisors in $\P^n$
are said to be Cremona equivalent if
there is a Cremona modification sending
one to the other. In this paper I study irreducible cones in $\P^n$ and prove
that two cones are Cremona equivalent if  their general
hyperplane sections are birational.
In particular I produce examples of cones in $\P^3$ Cremona
equivalent to a plane whose plane section is not Cremona equivalent to a
line in $\P^2$.
\end{abstract}

\maketitle

\section*{Introduction}
Let $X\subset\P^n$ be an irreducible and reduced projective variety over an algebraically
closed field. A classical question is to study the birational
embedding of $X$ in $\P^n$ up to the Cremona group of $\P^n$. 
In other words let $X_1$ and $X_2$ be 
two birationally equivalent
projective varieties in $\P^n$.  One wants to understand  if there exists  a
Cremona transformation of $\P^n$ that maps $X_1$ to $X_2$, in this
case we say that $X_1$ and $X_2$ are Cremona equivalent. This
projective statement  can also be interpreted in terms of log Sarkisov
theory, \cite{BM}, and is
somewhat related to the  Abhyankar--Moh problem, \cite{AM} and
\cite{Je}. In the latter paper it is proved, using techniques derived
form A--M problem, that over the complex
field the birational
embedding is unique as long as $\dim X< \frac{n}2$. 
The problem is then completely solved in \cite{MP} where it is proved
that this is the case over any algebraically closed field as long as the codimension of $X_i$ is at
least 2. Examples of inequivalent embeddings of divisors are well
known, see also \cite{MP}, in all dimensions.
The problem of Cremona equivalence is therefore reduced to study the
equivalence  classes of divisors. This can also be interpreted as the action
of the Cremona group on the set of divisors of $\P^n$. 

The special case of plane curves received a lot of attention both in
the old times, \cite{Co}, \cite{SR}, \cite{Ju}, and in more recent times, \cite{Na}, \cite{Ii}, \cite{KM},
\cite{CC}, and \cite{MP2}, see also \cite{BB} for
a nice survey. In \cite{CC} and \cite{MP2} a complete
description of plane curves up to Cremona equivalence is given and in
\cite{CC} a detailed study of the Cremona equivalence for linear systems 
is furnished. 
In particular it is interesting to note that the Cremona equivalence
of a plane curve is dictated by its singularities and cannot be
divined without a partial resolution of those, \cite[Example
3.18]{MP2}. Due to this it is quite hard even in the plane curve case to
determine the Cremona equivalence class of a fixed curve simply by its
equation. 

The next case is that of surfaces in $\P^3$. In this set up using the
$\sharp$-Minimal Model Program, \cite{Me} or minimal model program
with scaling \cite{BCHMc}, a criterion for detecting
surfaces Cremona equivalent to a plane is given. The criterion, inspired by
the previous work of Coolidge on curves Cremona equivalent to lines
\cite{Co}, allows to determine all rational surfaces
that are Cremona equivalent to a plane, \cite[Theorem 4.15]{MP2}. 
Unfortunately, worse than in the plane
curve case, the criterion requires not only the
resolution of singularities but also a control on different log
varieties attached to the pair $(\P^3,S)$.  As a matter of fact it is
impossible to guess simply by the equation if a rational surface in
$\P^3$ is Cremona equivalent to a plane and it is very difficult in
general to determine such equivalences. 
The main difficulty comes from the condition that the sup-threshold,
see Definition \ref{def:rsup}, is positive.  
This is quite awkward and
should be very interesting to understand if this numerical constrain
is really necessary.  Via $\sharp$-MMP it is easy, see remark \ref{rem:sharp}, to reduce this
problem to the study of pairs $(T,S)$ such that $T$ is a terminal
$\Q$-factorial 3-fold with a Mori fiber structure $\pi:T\to W$ onto
either a rational curve or a rational surface, and $S$ is a smooth
Cartier divisor with $S=\pi^*D$ for some divisor $D\subset W$.
The first ``projective incarnation'' of such pairs are cones in $\P^3$.

In this work I develop a strategy to study  cones in arbitrary
projective space. If two cones in $\P^n$ are built on
varieties Cremona equivalent in $\P^{n-1}$  then also the cones are Cremona equivalent,
see Proposition \ref{pro:easy}. The expectation for arbitrary cones built on
birational but not Cremona equivalent varieties was not clear but somewhat more on
the negative side.  The result I prove is therefore quite unexpected
and shows once more the amazing power of the Cremona group of $\P^n$.

\begin{Th}
Let $S_1$ and $S_2$ be two cones in $\P^n$. Let $X_1$ and $X_2$ be 
corresponding general hyperplane sections. If  $X_1$ and $X_2$ are
birational then $S_1$ is Cremona
equivalent to $S_2$.
\end{Th}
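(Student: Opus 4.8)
The plan is to strip the cones down to a standard shape, reinterpret them on a $\P^1$--bundle, and then carry one cone to the other by a chain of elementary modifications dictated by the birational map of the bases. First I would normalise the pair. If the vertex of $S_i$ is a linear space $\Lambda_i$ of positive dimension, projecting $\P^n$ from a point of $\Lambda_i$ presents $S_i$ as the cone, with vertex a point $p_i$, over a hypersurface $Y_i\subset\P^{n-1}$, and a short computation with joins identifies $Y_i$, up to birational equivalence, with the general hyperplane section $X_i$ of $S_i$. So I may assume $S_i$ is the cone with vertex a point $p_i$ over $Y_i\subset\P^{n-1}$ and that $Y_1$ is birational to $Y_2$; the theorem becomes the assertion that the Cremona class in $\P^n$ of a cone with point vertex over $Y$ depends only on the birational class of $Y$. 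Blowing up $p_i$ turns this into a $\P^1$--bundle problem: the projection makes $\pi_i\colon T_i:=\Bl_{p_i}\P^n\to\P^{n-1}$ into $\P(\O_{\P^{n-1}}\oplus\O_{\P^{n-1}}(1))$, the strict transform of $S_i$ is exactly $\pi_i^{*}Y_i$, and $T_i\iso_{\,bir}\P^n$; this is precisely the Mori fibre picture of remark~\ref{rem:sharp}. Thus it suffices to find a birational self-map of this $\P^1$--bundle taking $\pi^{*}Y_1$ to $\pi^{*}Y_2$.

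Proposition~\ref{pro:easy} already supplies all the maps that preserve $\pi$ and descend to a Cremona transformation of the base $\P^{n-1}$; these move $\pi^{*}Y$ to $\pi^{*}Y'$ exactly when $Y$ and $Y'$ are Cremona equivalent in $\P^{n-1}$, which — as plane curves show — is strictly stronger than being birational, so the $\pi$--preserving maps alone cannot suffice. The extra room comes from the elementary transformations $\elm$ of the $\P^1$--bundle: blowing up a section over a divisor of the base and contracting the resulting ruled divisor the other way, which changes the bundle and, combined with blow-ups of the base, allows one to replace $\P^{n-1}$ by a blow-up of it while keeping track of the distinguished divisor.

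For the core construction I would fix a resolution of $\phi\colon Y_1\map Y_2$ realised inside a tower of blow-ups $\P^{n-1}=W_0\leftarrow W_1\leftarrow\dots\leftarrow W_r$ with smooth centres $C_j\subset W_j$ lying over the strict transform of $Y_1$, so that (weak factorisation of $\phi$) the strict transform of $Y_1$ in $W_r$ maps birationally onto a common model through which both $Y_1$ and $Y_2$ factor. Each step $W_{j+1}\to W_j$ I would lift to a link in $\P^n$: blow up the cone over $C_j$ with vertex $p$ — an irreducible variety, so Picard rank becomes $2$ — play the $2$--ray game, and contract the strict transform of the divisor produced at the previous step. Because $C_j$ sits inside the strict transform of a cone, the normal bundles that occur are those of a scroll over $C_j$, and the relevant extremal contractions are of the elementary--transformation type; running the whole tower and then its mirror down to $Y_2$ composes to a birational self-map of $\P^n$, which (after undoing the two blow-ups of the vertices) is the desired Cremona transformation carrying $S_1$ to $S_2$, the distinguished divisor being transformed from the cone over $W_j\cap(\text{current }Y)$ to the cone over $W_{j+1}\cap(\text{strict transform})$ at each link.

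The main obstacle is exactly the control of this tower: one has to check that blowing up the cone over a smooth centre and running the $2$--ray game stays inside birational models of $\P^n$ and that at the end everything contracts back, and — crucially — that the distinguished divisor never gets contracted and remains the strict transform of a cone, since only then does the final map send $S_1$ to $S_2$ as divisors. A perhaps more efficient substitute is to assemble an explicit Cremona transformation of $\P^n$ directly from the forms defining $\phi$ and $\phi^{-1}$, the equations of $Y_1,Y_2$, and the coordinate $x_n$ used as a free parameter, in which case the whole difficulty collapses to verifying birationality of the explicit map by writing down its inverse. In either approach a little care with the ground field is needed wherever weak factorisation (or $\sharp$-MMP) is invoked, so I would either work in characteristic zero or find a factorisation of $\phi$ that avoids it.
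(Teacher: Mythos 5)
Your reduction to a point vertex and the passage to the $\P^1$-bundle $\Bl_p\P^n\to\P^{n-1}$ with distinguished divisor $\pi^*Y_i$ match the paper's starting point, but the core of your argument has a genuine gap. You propose to realise the birational map $\phi\colon Y_1\map Y_2$ by a tower of blow-ups of $\P^{n-1}$ adapted to $Y_1$, lift each step to a link of $\P^1$-bundles, and then descend along ``the mirror'' tower to $Y_2$. The trouble is the join between the two towers: the top of the $Y_1$-tower is a model $W_r$ carrying the strict transform of $Y_1$, the top of the $Y_2$-tower is a model $W'_s$ carrying the strict transform of $Y_2$, and what you need is a birational map $W_r\map W'_s$ taking one strict transform to the other. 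That is exactly a Cremona-equivalence statement for the divisors $Y_1,Y_2$ in (blow-ups of) $\P^{n-1}$ --- the statement you yourself observe is strictly stronger than birationality and false in general, as plane curves show. Abstract birationality of the two strict transforms does not produce a map of the ambient models sending one to the other, and lifting the links to the $\P^1$-bundle does not create such a map either, since your links are fibrewise base changes and never move the distinguished divisor out of the class ``pullback of a divisor on the base''. The same objection applies to the ``explicit forms'' variant: $\phi$ is given by rational functions on $Y_1$, and extending them to forms on $\P^{n-1}$ or $\P^n$ is precisely the open problem.

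The idea you are missing is the reduction to codimension two. The paper performs an elementary transformation of the scroll centred on a section over (a resolution of) $Y_1$: it blows up the strict transform of a general hyperplane section of the cone and contracts the strict transform of $\pi^*Y_1$ onto a codimension-2 subvariety. After mapping back to $\P^n$, each cone $S_i$ becomes the valuation associated to the ideal $\I_{C'_i}$ of a codimension-2 subvariety $C'_i$ birational to $X_i$ (Proposition \ref{pro:con}). In codimension at least $2$ the obstruction you ran into disappears: by the main theorem of \cite{MP}, birational subvarieties of $\P^n$ of codimension at least $2$ are always Cremona equivalent, via a map that is an isomorphism at the generic point. Lemma \ref{lem:curve} then lifts that Cremona map to the blow-ups along $C'_1$ and $C'_2$, carrying exceptional divisor to exceptional divisor, and unwinding the constructions gives the equivalence of $S_1$ and $S_2$. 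Without this detour through codimension 2 (or some substitute for the theorem of \cite{MP}), your chain of links cannot close up.
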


The main ingredient in the proof is the reduction to subvarieties of
codimension 2 to be able to apply the main result in \cite{MP}. To do this I produce a special log resolution of
the pair $(\P^n,S)$ that allows me to blow down the strict transform
of $S$ to a codimension 2 subvariety. Despite the fact that this step
cannot produce a Cremona equivalence for $S$ it allows me to work out
Cremona equivalence on the lower dimensional subvariety and then lift the
Cremona equivalence.

In the special case of cones in $\P^3$ the statement can be improved
to characterize the Cremona equivalence of cones with the geometric genus of the
plane section, see Corollary \ref{cor:3}. In particular this shows
that any rational cone in $\P^3$ has positive sup-threshold, see Definition
\ref{def:rsup}.  From the $\sharp$-MMP point of view we may easily
translate 
it  as
follows.

\begin{Th}\label{th:sok} Let $S\subset\P^3$ be a rational surface. Assume that there
  is a $\sharp$-minimal model of the pair $(T,S_T)$ such that $T$ has
  a scroll structure $\pi:T\to W$  onto a rational surface $W$ and
  $S_T=\pi^*C$, for some rational curve $C\subset W$. Then $\overline{\rho}(T,S_T)=\overline\rho(\P^3,S)>0$.
\end{Th}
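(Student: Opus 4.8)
The plan is to read Theorem~\ref{th:sok} as the $\sharp$-MMP translation of the statement that every rational cone in $\mathbb{P}^3$ has positive sup-threshold, which in turn is a formal consequence of the main Theorem through Corollary~\ref{cor:3}. Two things must be checked: the equality $\overline{\rho}(T,S_T)=\overline{\rho}(\mathbb{P}^3,S)$, and the strict positivity of this common value. The first is built into the $\sharp$-MMP formalism of \cite{Me}: the sup-threshold of Definition~\ref{def:rsup} is constant along a $\sharp$-minimal model program, so it takes the same value on $(\mathbb{P}^3,S)$ and on any of its $\sharp$-minimal models $(T,S_T)$. Hence everything reduces to positivity.

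For positivity I would use the dictionary underlying remark~\ref{rem:sharp}. The hypothesis describes precisely the ``scroll over a surface'' incarnation: a Mori fiber structure $\pi\colon T\to W$ with $W$ a rational surface, generic fiber $\mathbb{P}^1$, and $S_T=\pi^*C$ for a rational curve $C$. This is exactly the picture attached to a cone over a plane curve: if $S_0\subset\mathbb{P}^3$ is a cone with vertex $v$ over $X_0\subset\mathbb{P}^2$, then blowing up $v$ and projecting from it realises $\Bl_v\mathbb{P}^3$ as a $\mathbb{P}^1$-bundle over $\mathbb{P}^2$ in which the strict transform of $S_0$ is the total transform $\pi^*X_0$. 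Conversely, one checks --- this is the ``projective incarnation'' remark --- that a rational surface $S$ admitting a $\sharp$-minimal model of this type lies in the same $\sharp$-minimal model class as such a cone $S_0$. Since $S_T$ is itself a $\mathbb{P}^1$-bundle over $C$, it is rational precisely when $C$ is, so $X_0$ may be taken to be a rational curve, and $\overline{\rho}(\mathbb{P}^3,S)=\overline{\rho}(T,S_T)=\overline{\rho}(\mathbb{P}^3,S_0)$.

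Now the main Theorem finishes the argument. A plane $H\subset\mathbb{P}^3$ is a cone over a line $\ell$, whose general hyperplane section is $\ell\cong\mathbb{P}^1$; since $X_0$ is rational it is birational to $\ell$, so the main Theorem gives that $S_0$ is Cremona equivalent to $H$. Thus $S_0$ is a rational surface Cremona equivalent to a plane, and \cite[Theorem~4.15]{MP2} --- equivalently, the invariance of $\overline{\rho}$ under Cremona equivalence together with its trivial positivity for a plane --- gives $\overline{\rho}(\mathbb{P}^3,S_0)>0$, whence $\overline{\rho}(T,S_T)=\overline{\rho}(\mathbb{P}^3,S)>0$. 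The step I expect to be the real obstacle is the reverse reading of remark~\ref{rem:sharp}: one must check that a $\sharp$-minimal model which is a scroll over a surface with $S_T$ pulled back from a rational curve really is the $\sharp$-minimal model of a cone over a rational plane curve, and that this identification respects Cremona equivalence rather than mere birationality. This is exactly where the codimension-$2$ reduction and the lifting of Cremona maps that underlie the proof of the main Theorem must be invoked; after that, the rest is a formal concatenation.
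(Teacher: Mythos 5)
Your reduction is pointed in the right direction---Theorem~\ref{th:sok} is indeed meant as the $\sharp$-MMP translation of the statement that rational cones have positive sup-threshold, the equality $\overline{\rho}(T,S_T)=\overline{\rho}(\P^3,S)$ is immediate because Definition~\ref{def:rsup} only depends on the birational class of the pair, and positivity does follow once one knows $(T,S_T)$ is birational to $(\P^3,H)$ for a plane $H$ (Remark~\ref{rem:pos}). But the step you flag at the end as ``the real obstacle'' is not a formality to be deferred: it is the entire content of the theorem, and your proposal does not supply it. Concretely, you assert that a pair $(T,S_T)$ with $\pi:T\to W$ a scroll and $S_T=\pi^*C$ ``lies in the same $\sharp$-minimal model class as'' a cone $S_0\subset\P^3$ over a rational plane curve, citing the ``projective incarnation'' phrase from the introduction. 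That phrase only says cones are examples of such pairs; the converse identification is exactly what must be constructed, and nothing in Remark~\ref{rem:sharp} or in the statement of Theorem~\ref{th:cone} produces it. Note also that $W$ is an arbitrary rational surface, not $\P^2$, and a general elementary modification of the scroll need not land on $\Bl_v\P^3\to\P^2$, so the ``blow up the vertex and project'' picture does not apply directly.

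The paper fills this gap in the proof of Corollary~\ref{cor:cb} (of which Theorem~\ref{th:sok} is the special case where $\pi$ has a section), and it does so without ever realizing $(T,S_T)$ as a literal cone: what is actually needed is a birational map $\chi:T\rat\P^3$ contracting $S_T$ to a codimension-$2$ curve $\Gamma$ with $S_T$ the valuation associated to $\I_\Gamma$. The section hypothesis guarantees all fibers of $\pi$ are irreducible, so an elementary transformation of the scroll along the section over $C$ blows $S_T=\pi^*C$ down to a curve $\Gamma$ birational to $C$, hence rational; one then runs a $\sharp$-MMP on the base $W$ (as in \cite[p.~700]{Me}) which is an isomorphism near $\Gamma$, reaching a Mori fiber space over $\P^2$ or a ruled surface, and further elementary transformations of the scroll structure land in $\P^3$. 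Only then do the main result of \cite{MP} (a rational codimension-$2$ curve in $\P^3$ is Cremona equivalent to a line via a map which is an isomorphism at its generic point) and the lifting Lemma~\ref{lem:curve} apply, identifying $S_T$ with the divisor extracted over a line, i.e.\ with a plane, exactly as in the proof of Theorem~\ref{th:cone} via Proposition~\ref{pro:con}. Your outline names these tools but never performs the contraction of $S_T$ to codimension~$2$ inside $\P^3$, which is where the section of $\pi$ is actually used; as written, the argument is circular at its central step.
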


The next candidate for the sup-threshold problem are pairs whose $\sharp$-minimal model
is a conic bundle and the surface is trivial with respect to the conic
bundle structure. With the technique developed in this paper I am only
able to treat a special class of these, see Corollary \ref{cor:cb}.

\section{Notations and preliminaries}
I work over an algebraically closed field of characteristic zero.
I am inte\-re\-sted in birational transformations of log pairs.
For this I introduce the following definition.

\begin{definition}\label{def:cremona} Let $D\subset X$ be
  an irreducible and  reduced divisor on
  a normal variety $X$.
We say that $(X,D)$ is birational
  to $(X',D')$, if there exists a
  birational map $\f:X\map X'$ with
  $\f_*(D)=D'$.
Let $D, D'\subset\P^n$ be
   irreducible reduced divisors then we say that $D$ is Cremona
  equivalent to $D'$ if
  $(\P^n,D)$ is birational to $(\P^n,D')$.
\end{definition}

Let us proceed recalling a well known class of singularities.

\begin{definition}
Let $X$ be a normal variety and $D=\sum d_i D_i$ a $\Q$-Weil divisor, with $d_i\leq 1$.
Assume that $(K_X+D)$ is $\Q$-Cartier. Let $f:Y\to X$ be a log
resolution of the pair $(X,D)$ with
$$K_Y=f^*(K_X+D)+\sum a(E_i,X,D) E_i$$
We call
\begin{multline*}
disc(X,D):=\min_{E_i}\{a(E_i,X,D)|\\
\mbox{$E_i$ is an $f$-exceptional
  divisor for some log resolution} \}
\end{multline*}

Then we say that $(X,D)$ is
 \[\begin{array}{ccc}
 \left.\begin{array}{l}
terminal\\
canonical\\
\end{array}
 \right\}&\mbox{if $disc(X,D)$}&
\left\{\begin{array}{l}
>0\\
\geq 0\\
\end{array}
 \right.
\end{array}
\]
\end{definition}

\begin{remark} Terminal surfaces are smooth, this is essentially the
  celebrated Castelnuovo theorem. Any log resolution of
  a smooth surface can be obtained via
  blow up of smooth points. Hence a pair $(S,D)$, with
  $S$ a smooth surface has canonical
  singularities if and only if $\mult_p
  D\leq 1$ for any point $p\in S$. 

Note further that one direction is true
in any dimension. Assume that $X$ is
smooth and  $\mult_p
D\leq 1$ for any point $p\in X$. Let  $f:Y\to X$ be a
smooth blow up, with exceptional divisor
$E$. Then $K_Y=f^*(K_X)+aE$ for some
positive integer $a$ and $a(E,X,D)\geq
a-1\geq 0$. This proves that 
 $(X,D)$ has canonical
singularities if $X$ is smooth and $\mult_pD\leq
1$ for any $p\in X$. This simple observation allows to produce many
inequivalent embeddings of divisors, see \cite[\S 3]{MP}
\label{rem:mult}
\end{remark}

For future reference we recall  a
technical result on pseudoeffective
divisors, i.e. the closure of effective divisors.
\begin{lemma}[{\cite[Lemma 1.5]{MP2}}]\label{lem:psef}
  Let $(X,D_X)$ and $(Y,D_Y)$ be
  birational pairs with canonical
  singularities. 
Then $K_X+D_X$ is pseudoeffective if and
  only if $K_Y+D_Y$ is pseudoeffective.
\end{lemma}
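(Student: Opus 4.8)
The plan is to pass to a common log resolution and compare the two adjunction formulae. Choose a smooth variety $W$ together with proper birational morphisms $p\colon W\to X$ and $q\colon W\to Y$ such that $q\circ p^{-1}$ realizes the given birational map $\f\colon X\map Y$, and such that $p$ and $q$ are simultaneously log resolutions of $(X,D_X)$ and of $(Y,D_Y)$. Let $\widetilde D$ denote the strict transform on $W$: since $\f_*D_X=D_Y$ with $D_X,D_Y$ irreducible reduced, $\f$ does not contract $D_X$, so the strict transform of $D_X$ via $p$ coincides with the strict transform of $D_Y$ via $q$ and $\widetilde D$ is unambiguous. From the definition of discrepancy one then has, on $W$,
\begin{equation*}
K_W+\widetilde D=p^*(K_X+D_X)+F_X,\qquad K_W+\widetilde D=q^*(K_Y+D_Y)+F_Y,
\end{equation*}
where $F_X=\sum a(E_i,X,D_X)E_i$ is supported on the $p$-exceptional locus and $F_Y$ on the $q$-exceptional locus (both pullbacks make sense because $K_X+D_X$ and $K_Y+D_Y$ are $\Q$-Cartier by the canonicity hypothesis). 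This is the only point where the hypothesis is used: since $(X,D_X)$ and $(Y,D_Y)$ are canonical, every exceptional discrepancy is $\geq 0$, hence $F_X\geq 0$ and $F_Y\geq 0$.

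Subtracting the two identities gives the key relation
\begin{equation*}
p^*(K_X+D_X)+F_X=q^*(K_Y+D_Y)+F_Y
\end{equation*}
on $W$. Now assume $K_X+D_X$ is pseudoeffective. Pulling back along the morphism $p$ preserves pseudoeffectivity, so $p^*(K_X+D_X)$ is pseudoeffective on $W$; adding the effective divisor $F_X$ keeps it pseudoeffective, so by the relation above $q^*(K_Y+D_Y)+F_Y$ is pseudoeffective on $W$. Finally push forward along $q$: as $F_Y$ is $q$-exceptional one has $q_*\bigl(q^*(K_Y+D_Y)+F_Y\bigr)=K_Y+D_Y$, and the pushforward of a pseudoeffective class under a proper birational morphism is again pseudoeffective (a pseudoeffective divisor plus an ample one is big, hence numerically the sum of an ample and an effective divisor, both of which push forward to pseudoeffective classes; then let the ample coefficient tend to $0$ and use that the pseudoeffective cone is closed). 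Hence $K_Y+D_Y$ is pseudoeffective, and the reverse implication follows by interchanging the roles of $X$ and $Y$.

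There is essentially no hard step: the content is bookkeeping of discrepancies, and the two facts about the pseudoeffective cone invoked above (stability under pullback by a morphism, and the pushforward statement) are standard. The one point that deserves care is the claim that the strict transforms of $D_X$ and $D_Y$ agree on $W$; this is exactly where the hypothesis $\f_*D_X=D_Y$ (rather than $\f$ contracting $D_X$) is needed, and it follows because the strict transform of $D_X$ on $W$ is an irreducible divisor whose image under $q$ is dense in $D_Y$, so it is not $q$-exceptional. Once this is settled, the canonicity assumption does all the remaining work by forcing the exceptional correction terms $F_X,F_Y$ to be effective, which is precisely what is required to run the pseudoeffectivity chase symmetrically in both directions.
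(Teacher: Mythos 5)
Your argument is correct and is essentially the same proof as in the cited source \cite[Lemma 1.5]{MP2} (the present paper only quotes the statement without reproving it): a common log resolution, the identification of the strict transforms, effectivity of the exceptional corrections $F_X,F_Y$ forced by canonicity, and the pseudoeffectivity chase via pullback followed by pushforward. No gaps; the only point worth noting is that each implication in fact uses canonicity of only one of the two pairs, but since both are assumed this is harmless.
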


 The main difficulty to study Cremona equivalence in $\P^r$ with
 $r\geq 3$ is the poor knowledge of the Cremona
group.
The case of surfaces in $\P^3$ is already
quite mysterious. It is easy to show
that Quadrics and rational cubics are Cremona
equivalent to a plane. Rational
quartics with either 3-ple or 4-uple
points are again easily seen to be
Cremona equivalent to planes, the latter
are cones over rational curves Cremona
equivalent to lines. 
 It has been expected that
Noether quartic should be the first
example of a rational surface
not Cremona equi\-va\-lent to a plane, but this is not the case as
proved in \cite[Example 4.3]{MP2}.
Having in mind  these examples and  the $\sharp$-MMP
developed in \cite{Me} for linear systems on uniruled 3-folds, I
recall the definition of
(effective) threshold.
\begin{definition}
Let $(T,H)$ be a terminal $\Q$-factorial uniruled
   variety and $H$ an irreducible and
   reduced Weil divisor
  on $T$. Let
  $$\rho(T,H)=:\mbox{  \rm sup   }\{m\in \Q|H+mK_T
\mbox{
  \rm is an effective $\Q$-divisor  }\}\geq 0,$$
 be the (effective) threshold of the pair
 $(T,H)$.
\end{definition}
\begin{remark} The threshold is not a
birational invariant of the pair and it
is not preserved by blowing up.
Consider
 a Quadric cone $Q\subset\P^n$ and let
$Y\to \P^n$ be the blow up of the vertex
then $\rho(Y,Q_Y)=0$, while
$\rho(\P^n,Q)>0$. 
\end{remark}

To study Cremona equivalence, unfortunately, we have to take into
account almost all possible thresholds.

\begin{definition}\label{def:rsup}
Let $(Y,S_Y)$ be a pair birational
to a pair $(T,S)$. We say that
$(Y,S_Y)$ is a good birational model if $Y$ has
terminal $\Q$-factorial singularities
and $S_Y$ is a Cartier divisor with terminal singularities. 
The sup-threshold
of the pair $(T,S)$ is
$$\overline{\rho}(T,S):=\sup\{\rho(Y,S_Y)\},$$
where the sup  is taken on good
birational models.
\end{definition}

\begin{remark} It is clear that any pair
  $(\P^n,S)$ Cremona equivalent to a
  hyperplane satisfies
  $\overline{\rho}(\P^n,S)>0$. The pair
  $(\P^n,H)$, where $H$ is a hyperplane, is a
  good model with positive
  threshold.\label{rem:pos}

Considering birationally super-rigid
MfS's one can produce examples of
pairs, say $(T,S)$, with
$\overline{\rho}(T,S)=0$. It is not
clear to me if such examples can exist
also on varieties with bigger pliability, see 
\cite{CM} for the relevant definition.  
\end{remark}
We are ready to state the characterization of  surfaces
Cremona equivalent to a plane.

\begin{theorem}[{\cite[Theorem 4.15]{MP2}}] Let $S\subset\P^3$ be an
  irreducible and reduced surface.
The following are equivalent:
\begin{itemize}
\item[a)]  $S$ is Cremona equivalent to a plane,
\item[b)]  $\overline{\rho}(T,S)>0$ and there is a good model
  $(T,S_T)$ with $K_T+S_T$ not pseudoeffective.
\end{itemize}
\label{th:3c}
\end{theorem}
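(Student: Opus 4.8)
The plan is to prove the two implications separately. The direction $(a)\Rightarrow(b)$ is immediate, with the plane itself as witness, while $(b)\Rightarrow(a)$ is the substantial direction and is where the $\sharp$-MMP enters.

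For $(a)\Rightarrow(b)$: if $S$ is Cremona equivalent to a plane $H$, then by Definition \ref{def:cremona} the pair $(\P^3,S)$ is birational to $(\P^3,H)$. The latter is a good model in the sense of Definition \ref{def:rsup}, since $\P^3$ is smooth (terminal $\Q$-factorial) and $H$ is a smooth Cartier divisor. A one-line computation shows that $H+mK_{\P^3}$ has degree $1-4m$, hence is effective precisely when $m\leq 1/4$, so $\rho(\P^3,H)=1/4>0$; since the sup-threshold depends only on the birational class we conclude $\overline{\rho}(\P^3,S)=\overline{\rho}(\P^3,H)\geq 1/4>0$. Moreover $K_{\P^3}+H$ has degree $-3$ and so is not pseudoeffective, so $(\P^3,H)$ is the good model required in $(b)$. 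I would also record, via Lemma \ref{lem:psef}, that pseudoeffectivity of $K+S$ agrees on all good (canonical) models, so the condition in $(b)$ is genuinely a birational invariant of the class.

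For $(b)\Rightarrow(a)$: using $\overline{\rho}(\P^3,S)>0$ I first fix a good model $(T,S_T)$ with $\rho(T,S_T)>0$, that is with $S_T+mK_T$ effective for some $m>0$; by Lemma \ref{lem:psef} the hypothesis that $K+S$ is not pseudoeffective passes to this model, so $K_T+S_T$ is not pseudoeffective. I would then run the $\sharp$-MMP of \cite{Me} (equivalently an MMP with scaling, \cite{BCHMc}) on $(T,S_T)$. Since $K_T+S_T$ is not pseudoeffective the program cannot reach a minimal model and must terminate on a $\sharp$-minimal model carrying a Mori fiber structure $\pi:T'\to W$, with $T'$ terminal $\Q$-factorial and $S_{T'}$ Cartier. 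The delicate point is to steer the program so that a good model with strictly positive threshold survives, leaving $\rho(T',S_{T'})>0$ on the output, with $\rho<1$ forced by the non-pseudoeffectivity. One then splits into cases according to $\dim W\in\{0,1,2\}$: a $\Q$-Fano threefold of Picard number one, a del Pezzo fibration over $\P^1$, or a conic bundle over a rational surface. In each case the positive threshold bounds $S_{T'}$ relative to $-K_{T'}$, and I would produce an explicit chain of log Sarkisov links, \cite{BM}, untwisting $(T',S_{T'})$ to $(\P^3,H)$; composing these exhibits $S$ as Cremona equivalent to the plane.

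The main obstacle is exactly the interplay of the two hypotheses in $(b)$: the non-pseudoeffectivity of $K+S$ guarantees a Mori fiber output, but one must simultaneously carry the positivity $\rho>0$ through the whole program onto that single output, and then, in the conic bundle case $\dim W=2$, convert $\rho(T',S_{T'})>0$ into genuine contractibility of $S_{T'}$. This is precisely the sup-threshold positivity difficulty emphasized in the introduction, and it is what makes the criterion hard to verify in practice even though the equivalence itself holds.
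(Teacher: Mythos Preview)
The present paper does not contain a proof of this statement: it is quoted verbatim as \cite[Theorem 4.15]{MP2} and used as an external input (see the discussion surrounding the theorem and Remark~\ref{rem:sharp}). There is therefore no proof here to compare your proposal against; the argument lives entirely in \cite{MP2}.

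That said, a few comments on your sketch. Your $(a)\Rightarrow(b)$ is exactly the content of Remark~\ref{rem:pos} in the paper and is fine. For $(b)\Rightarrow(a)$ your outline is in the right spirit---run a $\sharp$-MMP from a good model with positive threshold and non-pseudoeffective $K+S$, land on a Mori fibre space, and untwist---but what you have written is a plan rather than a proof. Two points deserve care. First, when you invoke Lemma~\ref{lem:psef} to transport non-pseudoeffectivity to the chosen good model you need the pair to be canonical; the definition of good model only asks that $T$ be terminal $\Q$-factorial and $S_T$ Cartier with terminal (hence smooth) singularities, so canonicity of the \emph{pair} $(T,S_T)$ requires a word of justification, not just Remark~\ref{rem:mult} which assumes $X$ smooth. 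Second, and more seriously, the sentence ``steer the program so that a good model with strictly positive threshold survives'' and ``produce an explicit chain of log Sarkisov links'' is precisely the content of the theorem: the whole difficulty, as the introduction and Remark~\ref{rem:sharp} emphasise, is that the threshold can drop along the $\sharp$-MMP and that the case analysis on the Mori fibre output (especially the conic bundle case) is delicate. Your proposal names these steps but does not carry them out, so as written it is a correct roadmap but not a proof.
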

\begin{remark}
  \label{rem:sharp} As remarked in the introduction the main drawback of the above criterion  is the bound on the
  sup-threshold. It is very difficult to compute it. While the
  requirement that $K_T+S_T$ is not pseudoeffective is natural and
  justified also by Lemma \ref{lem:psef}, it
  is not clear if pairs with vanishing sup
  threshold may exist on a rational 3-fold.
This naturally leads to study good models with vanishing threshold.

Let $(X,S)$ be a good pair with $X$ rational and $\rho(X,S)=0$.  Then the $\sharp$-MMP
applied to this pair may lead to a Mori fiber space $\pi:T\to W$ such that
$S_T$ is trivial with respect to $\pi$ and it is a smooth surface, see
\cite[Theorem 3.2]{Me} and the proof of \cite[Theorem 4.9]{MP2}. In
particular $S_T=\pi^*D$ for some irreducible divisor $D\subset W$.
Then if $W$ is a curve $S_T$ is a smooth fiber of $\pi$, that is a del
Pezzo surface. If $W$ is a surface then $S_T$ is a (not necessarily
minimally) ruled surface and $\pi$ is a conic bundle structure. In the latter case if $\pi$ has a section it is
easy, see for instance the proof of Corollary \ref{cor:cb}, to prove that $(T,S_T)$ is
birational to a cone in $\P^3$.
\end{remark}

\section{Cremona equivalence for cones}\label{sec:cones}
Here I am interested in cones in $\P^n$. Let $S\subset \P^n$ be an
irreducible reduced divisor of degree $d$  with a point $p$ of
multiplicity $d$. Let $H$ be a hyperplane in $\P^n\setminus \{p\}$ and
$C=S\cap H$. Then $S$ can be viewed as the cone over the variety
$C$.  
It is easy to see that if $C_1, C_2\subset\P^{n-1}$ are Cremona
equivalent divisors then the cones over them are  Cremona
equivalent.

\begin{proposition}\label{pro:easy} Let $C_1$ and $C_2$ be Cremona
  equivalent divisors in $\P^{n-1}$ and  $S_1$, $S_2$ cones over them
  in $\P^n$. Then $S_1$ is
  Cremona equivalent to $S_2$.
\end{proposition}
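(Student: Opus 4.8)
The plan is to exploit the fact that the cone over $C_i\subset\P^{n-1}$ becomes, after one fixed birational change of coordinates on $\P^n$, simply the product $C_i\times\P^1$, and then to lift the given Cremona transformation of $\P^{n-1}$ trivially along the $\P^1$-factor.

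Concretely, I would first choose homogeneous coordinates $[x_0:\dots:x_n]$ on $\P^n$ so that the vertex of $S_i$ is $p=[0:\dots:0:1]$ and $C_i$ is the intersection of $S_i$ with the hyperplane $H=\{x_n=0\}\iso\P^{n-1}$; thus $S_i$ is cut out by the same equation $F_i(x_0,\dots,x_{n-1})=0$ as $C_i$ inside $H$. Next I would introduce the birational map
$$\beta\colon\P^n\map\P^{n-1}\times\P^1,\qquad [x_0:\dots:x_n]\longmapsto\bigl([x_0:\dots:x_{n-1}],\,[x_0:x_n]\bigr),$$
with inverse $\bigl([y_0:\dots:y_{n-1}],[s:t]\bigr)\mapsto[y_0s:\dots:y_{n-1}s:y_0t]$. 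Since on the first factor $\beta$ is the linear projection from $p$, its indeterminacy locus $\{x_0=x_n=0\}$ has dimension $n-2$, so $\beta$ contracts no divisor and a general point of $S_i$ projects into $C_i$; it follows that $\beta_*(S_i)=C_i\times\P^1$ as prime divisors, both being irreducible of dimension $n-1$.

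Now let $\psi\colon\P^{n-1}\map\P^{n-1}$ be a Cremona transformation with $\psi_*(C_1)=C_2$, which exists by hypothesis; by definition this means $\psi$ restricts to a birational map $C_1\map C_2$, in particular it does not contract $C_1$. Then $\psi\times\mathrm{id}_{\P^1}$ is a birational self-map of $\P^{n-1}\times\P^1$ that does not contract $C_1\times\P^1$ and sends it to $C_2\times\P^1$. I would then set
$$\Psi:=\beta^{-1}\circ(\psi\times\mathrm{id}_{\P^1})\circ\beta\ \in\ \Bir(\P^n),$$
and conclude, by functoriality of strict transforms under the non-contracting maps involved, that $\Psi_*(S_1)=\beta^{-1}_*(\psi\times\mathrm{id}_{\P^1})_*\beta_*(S_1)=\beta^{-1}_*(C_2\times\P^1)=S_2$, which is the assertion. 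As a completely explicit alternative, if $\psi$ is given by forms $g_0,\dots,g_{n-1}$ of degree $d$ in $x_0,\dots,x_{n-1}$, one verifies directly that $\Psi=[\,x_0g_0:\dots:x_0g_{n-1}:x_ng_0\,]$ is a Cremona transformation of $\P^n$ (its inverse has the same shape, built from the inverse of $\psi$) which carries $S_1$ onto $S_2$.

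I do not expect a genuine obstacle in this statement — it is really a matter of packaging. The only point that needs (minor) care is the bookkeeping of strict transforms: one must check that none of $\beta$, $\psi\times\mathrm{id}_{\P^1}$, $\beta^{-1}$ contracts the divisor it is applied to, so that the pushforwards compose as written. This is automatic: $\beta$ and $\beta^{-1}$ are birational, and the very meaning of $\psi_*(C_1)=C_2$ is that $\psi$ does not contract $C_1$. (One should also discard the trivial case in which $C_i$ is a hyperplane, where $S_i$ is itself a hyperplane and there is nothing to prove.)
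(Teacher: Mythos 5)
Your argument is correct and essentially coincides with the paper's: both lift the Cremona map $\psi$ of $\P^{n-1}$ fibrewise along the pencil of lines through the vertex, the paper doing this via the linear system $\{f_0,\dots,f_{n-1},x_nx_0^{h-1}\}$, which is a twin of your explicit formula $[x_0g_0:\dots:x_0g_{n-1}:x_ng_0]$. One minor slip: $\beta$ does contract a divisor, namely the hyperplane $\{x_0=0\}$, which maps onto $\{x_0=0\}\times\{[0:1]\}$; this is harmless, since the only fact you actually use is that $S_i$ itself is not contracted and dominates $C_i\times\P^1$, which you verify directly (after arranging $C_i\not\subset\{x_0=0\}$).
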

\begin{proof}
  Without loss of generality I may assume that $S_1$ and $S_2$ have the
  same vertex in the point $[0,\ldots,0,1]$ and $C_1\cup C_2\subset (x_n=0)$. Let $\H\subset|\O_{\P^{n-1}}(h)|$
  be a linear system realizing the Cremona equivalence between $C_1$ and
  $C_2$. 
Hence I have a  Cremona map  $\psi:\P^{n-1}\rat\P^{n-1}$ given by a $n$-tuple $\{f_0,\ldots,f_{n-1}\}$,
with $f_i\in k[x_0,\ldots,x_{n-1}]_h$. This allows me to produce a map $\Psi:\P^n\rat\P^n$ considering the linear system 
$$\H^\prime:=\{f_0,\ldots,f_{n-1},x_nx_0^{h-1}\}. $$
Note that the general element in $\H^\prime$ has 
multiplicity $h-1$ at the point $p$ and, in the chosen base, there is
only one element of
multiplicity exactly $h-1$.  This shows that lines through $p$ are
sent to lines through a fixed point. Moreover the restriction 
$\Psi_{|(x_n=0)}$ is the original map $\psi$. Hence the map $\Psi$ is
birational and realizes the required Cremona equivalence.
\end{proof}

Next I want to understand what happens if $C_1, C_2\subset\P^{n-1}$ are simply
birational as abstract varieties. To do this I need to produce 
``nice'' good models of the pairs $(\P^n,S_1)$ and $(\P^n,S_2)$.

Let 
$C\subset H\subset\P^n$ be a codimension two subvariety and
$S$ be a cone with vertex
$p\in\P^n\setminus H$ over $C$. I produce a good model of the
pair $(\P^n,S)$ as follows. First I blow up $p$ producing a morphism
$\epsilon:Y\to \P^n$ with exceptional
divisor $E$. Note that $Y$ has a scroll structure $\pi:Y\to\P^{n-1}$ given by
lines through $p$, and $S_Y$,
the strict transform, is just $\pi^*(C)$. Let $\nu:W\to\P^{n-1}$ be a
 resolution of the singularities of $C$ and take the fiber product
$$\xymatrix{
Z\ar[d]_{\pi_W}\ar[rr]^{\nu_Y}&&Y\ar[d]^{\pi}\\
W\ar[rr]^{\nu}&&\P^{n-1}.}
$$
Then the strict transform $S_Z$ is a smooth divisor and $(Z,S_Z)$ is a
good model of $(\P^n,S)$. Note that the threshold $\rho(Z,S_Z)$
vanishes. According to the $\sharp$-MMP philosophy this forces us to  produce different good models.

The following Lemma is probably well known, but I
prefer to state it, and prove it, to help the reader.
\begin{lemma}
  \label{lem:curve}
Let $C\subset X$ be an irreducible and reduced subvariety. Assume that
there exists a birational map $\chi:X\rat Y$ such that $\chi$ is an isomorphism on the generic point of $C$. Let $D:=\chi(C)$ be the image
and $X_C$, respectively $Y_D$ the blow up of $C$ and $D$ with morphism
$f_C$, $f_D$, and
exceptional divisors $E_C$, $E_D$ respectively. Then there
is a birational map $\chi_C:X_C\rat Y_D$ mapping $E_C$ onto $E_D$. In
other words $(X_C,E_C)$ is birational to $(Y_D,E_D)$.    
\end{lemma}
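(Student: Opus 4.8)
The plan is to build the birational map $\chi_C$ "by hand" using the universal property of blow-ups, then check it does the right thing on exceptional divisors via a resolution where everything becomes transparent. First I would take a common resolution of $\chi$: a smooth variety $V$ with proper birational morphisms $g\colon V\to X$ and $h\colon V\to Y$ such that $h = \chi\circ g$ on the locus where this makes sense. Since $\chi$ is an isomorphism at the generic point of $C$, the strict transform $C_V$ of $C$ on $V$ coincides with the strict transform $D_V$ of $D$; call this common subvariety $\Gamma\subset V$ and note $g(\Gamma)=C$, $h(\Gamma)=D$, with $g$ (resp. $h$) birational from $\Gamma$ to $C$ (resp. $D$). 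After possibly blowing up $V$ further along $\Gamma$ — which changes neither $X$, $Y$, nor the birational class of anything — I may assume $\Gamma$ is a divisor on $V$, so that $g$ factors through $f_C\colon X_C\to X$ by the universal property of the blow-up $\Bl_C X$ (the ideal sheaf $\mathcal{I}_C\cdot\O_V$ is invertible), giving $g_C\colon V\to X_C$ with $f_C\circ g_C=g$; symmetrically $h_C\colon V\to Y_D$ with $f_D\circ h_C=h$. Both $g_C$ and $h_C$ are birational, so $\chi_C:=h_C\circ g_C^{-1}\colon X_C\map Y_D$ is the desired birational map.

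The remaining point is that $\chi_C$ carries $E_C$ onto $E_D$, i.e. $(\chi_C)_*E_C=E_D$. For this I track the divisor $\Gamma\subset V$: its image $g_C(\Gamma)$ is contained in $E_C$ (because $g(\Gamma)=C$ and $g_C$ lifts $g$), and since $g_C$ is birational and $\Gamma$, $E_C$ are both prime divisors with $g_C(\Gamma)$ dense in $E_C$ — here I use that $g$ is an isomorphism over the generic point of $C$ after shrinking, so no divisor of $X_C$ other than $E_C$ can dominate $C$ — the divisor $\Gamma$ is precisely the strict transform of $E_C$, equivalently $g_C$ is an isomorphism at the generic point of $\Gamma$ and $(g_C)_*\Gamma=E_C$. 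The identical argument on the $Y$-side gives $(h_C)_*\Gamma=E_D$. Composing, $(\chi_C)_*E_C=(h_C)_*(g_C^{-1})_*E_C=(h_C)_*\Gamma=E_D$, which is exactly the assertion that $(X_C,E_C)$ is birational to $(Y_D,E_D)$ in the sense of Definition \ref{def:cremona}.

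The main obstacle — really the only subtle point — is verifying that $\Gamma$ is genuinely the strict transform of $E_C$ and not some other exceptional divisor sitting over $C$, which is what forces the hypothesis that $\chi$ is an isomorphism at the generic point of $C$: this guarantees that $E_C$ is the unique prime divisor on $X_C$ dominating $C$ that can be dominated by $\Gamma$, so the identification of $\Gamma$ with the strict transform of $E_C$ is unambiguous. One mild technicality to handle cleanly along the way is the reduction to $\Gamma$ being Cartier on $V$; this is standard (blow up $V$ along $\Gamma$, or invoke the fact that the exceptional locus of a birational morphism from a smooth variety is of pure codimension one after such a blow-up), and it does not affect any of the conclusions since all constructions are birationally invariant.
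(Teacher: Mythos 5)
Your argument rests on the same key tool as the paper's proof, namely the universal property of blowing up combined with the hypothesis that $\chi$ is an isomorphism at the generic point of $C$; but you route it through a common resolution $V$, whereas the paper works directly on the open set $U\subset X$ where $\chi$ restricts to an isomorphism: there $f_C^{-1}(U)=\Bl_{C\cap U}U$ is identified with $U\times_Y Y_D$, which immediately gives the morphism $\chi_C$ on $f_C^{-1}(U)$ together with the matching of the exceptional divisors over the dense open sets $C\cap U$ and $\chi(C\cap U)$. The paper's version avoids all the bookkeeping you have to do (choosing $V$ so that the strict transforms of $C$ and $D$ are well defined and birational to $C$, identifying $\Gamma$ with the strict transform of $E_C$), since over $U$ nothing exceptional interferes.

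One step of your write-up is not correct as justified: after blowing up $V$ along the strict transform $\Gamma$ you assert that $\I_C\cdot\O_V$ becomes invertible. This is false in general, because the total transform $g^{-1}(C)$ usually contains $g$-exceptional components besides $\Gamma$ (and may carry a nonreduced structure), so making $\Gamma$ alone into a divisor does not make the inverse image ideal sheaf invertible, and the universal property does not yet apply. The repair is standard --- blow up the ideal $\I_C\cdot\O_V$ (and then $\I_D\cdot\O$) rather than the subvariety $\Gamma$, or simply take $V$ to dominate $X_C$ and $Y_D$ from the outset --- and it does not disturb the rest of your argument, in particular the identification of $\Gamma$ with the strict transform of $E_C$ and of $E_D$, which is carried out correctly. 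Still, the cleanest course is the paper's: define $\chi_C$ only over $U$, where no such issue arises, and note that a birational map needs only to be defined on a dense open set.
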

\begin{proof}Let $U\subset X$ be an open and dense subset intersecting
  $C$ such that $\chi_{|U}$ is an isomorphism. Then considering the fiber
  product
$$\xymatrix{
X_C\supset U_C\ar[d]_{f_{U}}\ar[rr]^{\chi_C}&&Y_D\ar[d]^{f_D}\\
X\supset U\ar[rr]^{\chi_{|U}}&&Y,}
$$
I conclude, by the Universal Property of Blowing Up, the existence of
the morphism $\chi_C$ with the required properties.
\end{proof}
\begin{remark}
  Let me stress that the above result is in general not true with the
  weaker assumption that  $\chi$ is a morphism  on the general point of
  $W$. On the other hand if $Y$ is
  $\P^n$ the statement can be rephrased also in this weaker form.
\end{remark}

Let us go back to the pair $(Z,S_Z)$. Let $\Gamma_Z$ be the strict
transform of a general hyperplane section of $S$. Then I may consider the
following elementary transformation of the scroll structure $\pi_W$
$$\xymatrix{
&Bl_{\Gamma_Z} Z\ar[dl]_{\gamma}\ar[dr]^{\eta}&\\
Z\ar@{.>}[rr]^{\Phi}\ar[d]^{\pi_W}&&V\ar[d]\\
W\ar[rr]^{\iso}&&W,}
$$
where $\gamma$ is the blow up of $\Gamma_Z$ and $\eta$ is the blow down
of the strict transform of $S_Z$ to a codimension 2 subvariety, say $\Gamma_S$.
Then there is a birational map $\f:V\rat\P^n$ sending $\Gamma_S$ to a
codimension 2 subvariety, say $\Gamma^\prime $, and such that $\f$ is an isomorphism on the generic point
of $\Gamma_S$. The map $\f$ can be easily constructed again via an
elementary transformation of the scroll structure followed by blow
downs of  exceptional divisors. 
We may summarize the above construction in the following proposition
and diagram.
\begin{proposition}
  \label{pro:con}
Let $S\subset\P^n$ be a cone and $C$ a general hyperplane section. Then
there are birational maps $\epsilon:\P^n\rat Y$ and $\eta:Y\rat V$ such that
$\epsilon_*S=:E$  and $\eta$ is the blow down of $E$ to a codimension
2 subvariety $\Gamma$. In particular $S$ is the valuation
associated to the ideal $\I_\Gamma$. Moreover there is a third
birational map $\f:V\rat \P^n$ sending $\Gamma$ to a codimension 2 subvariety
$C^\prime$ and such
that again $S$ is the valuation associated to $\I_{C^\prime}$
$$\xymatrix{
&Y\ar[dr]_{\eta}\supset E&\\
&&V\supset\Gamma\ar@{.>}[d]^{\f}\\
\P^n\supset S\ar@{.>}[rr]^{\Phi}\ar@{.>}[uur]^{\epsilon}&&\P^n\supset C^\prime}
.$$
The composition $\Phi:\P^n\rat\P^n$ is a birational map sending $S$ to
the codimension 2 subvariety $C^\prime$ and such that $S$ is the valuation associated to
the ideal $\I_{C^{\prime}}$.
\end{proposition}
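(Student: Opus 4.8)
The plan is to follow faithfully the construction sketched in the paragraphs preceding the statement, organizing it into three birational steps and checking at each stage that the relevant divisor is contracted to, or carried to, a codimension 2 subvariety, and that the discrepancy bookkeeping identifies $S$ with the divisorial valuation attached to $\I_\Gamma$ (respectively $\I_{C'}$). First I would set up the cone picture explicitly: after blowing up the vertex $p$ with $\epsilon_0:Y_0\to\P^n$ and exceptional divisor $E_0$, the variety $Y_0$ carries the scroll $\pi:Y_0\to\P^{n-1}$ whose fibers are the strict transforms of lines through $p$, and $S_{Y_0}=\pi^*(C)$. Pulling back a resolution $\nu:W\to\P^{n-1}$ of $C$ along $\pi$ gives the good model $(Z,S_Z)$ with $S_Z$ smooth; I record that $S_Z=\pi_W^*(\widetilde C)$ for $\widetilde C\subset W$ the strict transform (an isomorphic copy) of $C$, so $\rho(Z,S_Z)=0$, exactly as noted.

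Next I would perform the elementary transformation of the scroll $\pi_W$ along $\Gamma_Z$, the strict transform of a general hyperplane section of $S$. Blowing up $\Gamma_Z$ by $\gamma:Bl_{\Gamma_Z}Z\to Z$ and then blowing down the strict transform of $S_Z$ by $\eta$ produces $V$ together with the codimension 2 subvariety $\Gamma_S=\eta(\widetilde{S_Z})$; the key local computation here is that inside each ruling $\P^1$-bundle the section given by $S_Z$ meets the section $\Gamma_Z$ and, after the blow-up along $\Gamma_Z$, the strict transform of $S_Z$ becomes contractible in the fiber direction — this is the standard elementary transformation of a $\P^1$-bundle, and it is what forces $\eta$ to contract $E$ (the strict transform of $S$ in $Y$) precisely to a codimension 2 locus $\Gamma$ rather than to something of the expected codimension 1. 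Composing $\epsilon_0$ with the passage $Z\dashrightarrow V$ and relabelling gives the maps $\epsilon:\P^n\dashrightarrow Y$ and $\eta:Y\dashrightarrow V$ in the statement with $\epsilon_*S=E$ and $\eta$ contracting $E$ onto $\Gamma$. That $S$ is the divisorial valuation associated to $\I_\Gamma$ is then immediate: $\eta$ extracts exactly the exceptional divisor over $\Gamma$, and that divisor is $E$, whose strict transform back on $\P^n$ is $S$.

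Finally, for the map $\f:V\dashrightarrow\P^n$ I would repeat the same elementary-transformation-plus-blow-down recipe once more, now starting from the scroll structure on $V$ over $W$: choose a general hyperplane section's strict transform as the new center, make an elementary transformation to move $\Gamma_S$ into a divisor transverse to the ruling, then blow down the resulting exceptional $\P^{n-1}$-bundles one factor at a time down to $\P^n$, tracking that $\Gamma_S$ goes isomorphically (on its generic point) to a codimension 2 subvariety $C'\subset\P^n$; Lemma \ref{lem:curve}, together with the remark that for the target $\P^n$ it applies already when $\f$ is merely a morphism on the generic point of $\Gamma_S$, guarantees that the blow-ups of $\Gamma_S$ and $C'$ have birational total spaces carrying the exceptional divisors to one another, hence $S$ is again the valuation of $\I_{C'}$. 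Assembling $\Phi=\f\circ\eta\circ\epsilon$ gives the diagram and the closing sentence.

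The step I expect to be the real obstacle is making rigorous the claim that the elementary transformation contracts $E$ (equivalently $S_Z$'s strict transform) to codimension 2 and not to a divisor: one must verify that after blowing up $\Gamma_Z$ the strict transform of $S_Z$ is covered by curves that are numerically trivial against the new scroll projection and have negative intersection with that strict transform, so that a genuine (projective, $\Q$-factorial) contraction $\eta$ with the asserted exceptional behaviour exists. This is where the cone hypothesis is used essentially — it is what makes $S_Z$ a pullback section of the scroll, so that the two sections $S_Z$ and $\Gamma_Z$ interact fibrewise in the simple way needed — and it is the only place where a nontrivial birational-geometry argument (rather than formal diagram-chasing plus the universal property of blow-ups) is required. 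The remaining points — smoothness of $S_Z$, the valuation identifications, and the factor-by-factor blow-down of the $\P^{n-1}$-bundles in constructing $\f$ — are routine.
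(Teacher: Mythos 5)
Your proposal follows essentially the same route as the paper, which gives no separate proof of Proposition \ref{pro:con} but presents exactly this construction in the preceding paragraphs: blow up of the vertex, base change to a resolution $W\to\P^{n-1}$, elementary transformation of the scroll $\pi_W$ along $\Gamma_Z$ contracting the strict transform of $S_Z$ to the codimension $2$ locus $\Gamma$, and a final map $\f:V\rat\P^n$ obtained by a further elementary transformation and blow-downs, with the valuation identifications read off from the exceptional divisor of the blow up of $\Gamma$. Your added care about why the elementary transformation contracts $E$ fibrewise (using that $S_Z=\pi_W^*\widetilde C$ and $\Gamma_Z$ is a section of $S_Z\to\widetilde C$) is consistent with, and slightly more explicit than, the paper's sketch.
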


We are now ready to prove the main result on cones in $\P^n$.
\begin{theorem}
  \label{th:cone} Let $S_1, S_2\subset\P^n$ be cones and $C_1$, $C_2$ general
  hyperplane sections. If $C_1$ is birational to $C_2$ then $S_1$ is Cremona
  equivalent to $S_2$. 
In particular all divisorial cones over a rational variety are Cremona equivalent to a hyperplane.
\end{theorem}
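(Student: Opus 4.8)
The plan is to reduce to subvarieties of codimension two and then quote the main theorem of \cite{MP}. Applying Proposition~\ref{pro:con} to $S_1$ and to $S_2$ I obtain birational self-maps $\Phi_i:\P^n\rat\P^n$ ($i=1,2$) that contract $S_i$ onto a codimension two subvariety $C_i'\subset\P^n$, in such a way that $S_i$ is the valuation associated to the ideal $\I_{C_i'}$. The first observation is that $C_i'$ is birational to $C_i$: in the construction preceding Proposition~\ref{pro:con} the subvariety $\Gamma$ is obtained by blowing down the divisor $E$, which is birational to $S_i$ and hence to a cone over $C_i$, along its ruling, so $\Gamma$ is birational to $C_i$; then $C_i'$ is birational to $\Gamma$ because $\f$ is an isomorphism at the generic point of $\Gamma$. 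Consequently the hypothesis that $C_1$ and $C_2$ are birational gives that $C_1'$ and $C_2'$ are birational.

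Next I invoke \cite{MP}: since $C_1'$ and $C_2'$ have codimension two in $\P^n$ and are birational as abstract varieties, there is a Cremona transformation $g:\P^n\rat\P^n$ which induces a birational equivalence of $C_1'$ with $C_2'$; in particular $g(C_1')=C_2'$ and $g$ is a morphism, indeed an isomorphism, at the generic point of $C_1'$. By Lemma~\ref{lem:curve}, applied in the weaker form valid when the target is $\P^n$ (see the remark after it), it follows that $(\Bl_{C_1'}\P^n,E_{C_1'})$ is birational to $(\Bl_{C_2'}\P^n,E_{C_2'})$, where $E_{C_i'}$ denotes the exceptional divisor of the blow up of $C_i'$ in $\P^n$.

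Finally I lift this back to the cones. Saying that $S_i$ is the valuation associated to $\I_{C_i'}$ means precisely that composing $\Phi_i$ with the inverse of the blow up $\Bl_{C_i'}\P^n\to\P^n$ carries $S_i$ to $E_{C_i'}$, so that $(\P^n,S_i)$ is birational to $(\Bl_{C_i'}\P^n,E_{C_i'})$. Chaining the three equivalences,
\[(\P^n,S_1)\sim(\Bl_{C_1'}\P^n,E_{C_1'})\sim(\Bl_{C_2'}\P^n,E_{C_2'})\sim(\P^n,S_2),\]
which is exactly the statement that $S_1$ is Cremona equivalent to $S_2$; concretely, the composite $\Phi_2^{-1}\circ g\circ\Phi_1$ is a Cremona transformation of $\P^n$ taking the valuation $\operatorname{ord}_{S_1}$ to $\operatorname{ord}_{S_2}$, hence sending $S_1$ to $S_2$. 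For the last assertion, a divisorial cone over a rational variety has general hyperplane section birational to $\P^{n-2}$, while a hyperplane $\P^{n-1}\subset\P^n$ is itself a cone with general hyperplane section $\P^{n-2}$; the two cones then have birational hyperplane sections and so, by what was just proved, are Cremona equivalent.

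The genuinely hard geometric work is contained in Proposition~\ref{pro:con}; what remains delicate is making sure that the Cremona transformation supplied by \cite{MP} can be chosen to restrict to an actual birational equivalence $C_1'\rat C_2'$, rather than merely to map one subvariety onto the other, since this is what makes Lemma~\ref{lem:curve} applicable and guarantees that the valuations $\operatorname{ord}_{C_i'}$ — equivalently the divisors $S_i$ — are matched correctly. The other point to keep an eye on is the identification $C_i'\sim C_i$: although immediate from the construction, a careless reading of the elementary transformation could introduce a twist and replace $C_i'$ by a different birational model.
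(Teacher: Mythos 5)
Your proposal is correct and follows essentially the same route as the paper: reduce via Proposition~\ref{pro:con} to the codimension-two subvarieties $C_i'$, apply the main result of \cite{MP} (in the strong form where the Cremona map is an isomorphism at the generic point of $C_1'$), and lift through Lemma~\ref{lem:curve} using that $S_i$ is the valuation associated to $\I_{C_i'}$. Your explicit justification that $C_i'$ is birational to $C_i$ and your spelled-out treatment of the final ``in particular'' clause are details the paper leaves implicit, but the argument is the same.
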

\begin{remark}
I doubt the other direction is true. Let $X\subset\P^n$ be a non
rational but stably
rational variety. Assume that $X\times\P^a$ is
rational but $X\times\P^{a-1}$ is not rational for some $a\geq
1$. Then $X\times\P^a$ can be birationally embedded as a cone, say $S$, with hyperplane
section birational to $X\times\P^{a-1}$. In principle $S$ could be
Cremona equivalent to a hyperplane but its hyperplane section cannot
be rational. This cannot occur for surfaces, see Corollary \ref{cor:3}.    
\end{remark}
\begin{proof}
Let $S_1$ and $S_2$ be two cones and $C_1$,  respectively, $C_2$ general hyperplane
sections. 
Then by Proposition \ref{pro:con} there are
birational maps $\Phi_i:\P^n\rat\P^n$ sending $S_i$ to a codimension 2
subvariety 
$C^\prime_i$ and such that:
\begin{itemize}
\item  $S_i$ is the valuation
associated to $\I_{C^\prime_i}$,
\item $C^\prime_i$ is birational to $C_i$.
\end{itemize} 
I am assuming that  $C_1$ is birational to $C_2$. Then the main result and its proof \cite[p. 92]{MP} states that there is a Cremona map
$\chi:\P^n\rat\P^n$ sending $C^\prime_1$ to $C^\prime_2$  and such that
$\chi$ is an isomorphism on the generic point of $C^\prime_1$.
Then by Lemma \ref{lem:curve} I may extend $\chi$ to the blow up of
the $C^\prime_i$ to produce the required Cremona equivalence $\Psi$
$$\xymatrix{
&Y_1\ar[dr]_{\eta_1}\ar@{.>}[rrr]^{\Psi}\supset E_1&&&Y_2\ar[dl]^{\eta_2}\supset E_2\\
&&V_1\supset\Gamma_1\ar@{.>}[d]^{\f_1}&V_2\supset\Gamma_2\ar@{.>}[d]_{\f_2}&\\
\P^n\supset S_1\ar@{.>}[rr]^{\Phi_1}\ar@{.>}[uur]^{\epsilon_1}&&\P^n\supset
C_1^\prime\ar@{.>}[r]^\chi&\P^n\supset C_2^\prime&&\P^n\supset S_2\ar@{.>}[ll]_{\Phi_2}\ar@{.>}[uul]_{\epsilon_2}}
$$
\end{proof}

As observed before the result can be strengthened in lower dimension.

\begin{corollary}\label{cor:3}
  Let $S_1, S_2\subset\P^3$ be cones and $C_1$, $C_2$ general
  hyperplane sections. Then $C_1$ is birational to $C_2$ if and only
  if  $S_1$ is Cremona
  equivalent to $S_2$. 
In particular rational cones  are Cremona equivalent to a plane.
\end{corollary}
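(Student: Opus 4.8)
The forward implication ``$C_1$ birational to $C_2 \Rightarrow S_1$ Cremona equivalent to $S_2$'', as well as the final sentence, is already contained in Theorem \ref{th:cone}: a rational curve is in particular birational to a line, so a cone over it is Cremona equivalent to a cone over a line, i.e.\ to a plane. Hence the only new content is the converse: if two cones $S_1,S_2\subset\P^3$ are Cremona equivalent, then $C_1$ is birational to $C_2$. The first step is to discard the Cremona structure entirely. By Definition \ref{def:cremona} a Cremona equivalence is induced by some $\omega\in\Bir(\P^3)$ with $\omega_*S_1=S_2$, and since $\omega$ does not contract $S_1$, restriction gives a birational map $S_1\map S_2$ of surfaces. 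So it suffices to prove that a cone $S\subset\P^3$ determines, up to birational equivalence, its general hyperplane section $C$.

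For this I would use that projection from the vertex $p$ of $S$ realizes $S$ birationally as a $\P^1$-bundle over $C$ (the fibres being the ruling lines), and that $S$ contains the hyperplane section $C$ itself as a rational section of this bundle; hence $S$ is birational to $C\times\P^1$. Equivalently, this is the smooth divisor $S_Z$ of Proposition \ref{pro:con}, a ruled surface over the normalization of $C$. Now invoke the classical fact that a ruled surface over a curve $B$ determines $B$ up to isomorphism: up to birational equivalence the ruling $B\times\P^1\to B$ is the maximal rationally connected fibration, whose target is a birational invariant of the surface. Concretely, when $g(B)\geq 1$ the Albanese morphism of $B\times\P^1$ is, up to birational equivalence, the projection onto $B$ (the $\P^1$-factor having trivial Albanese), so $B$ is recovered as the normalization of the image of the Albanese map, and the Albanese variety and map are birational invariants. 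Applying this with $B_i=C_i$ to the birational map $S_1\map S_2$ yields $C_1$ birational to $C_2$.

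The genus-zero case must be treated separately, as the ``unique ruling'' argument degenerates there: if $C_1$ is rational then $S_1$, hence also $S_2$, is a rational surface, so $C_2\times\P^1$ is rational, so $C_2$ is unirational, hence rational by L\"uroth; thus $C_1$ and $C_2$ are both rational and in particular birational. This also gives at once the last assertion (alternatively, cite the final sentence of Theorem \ref{th:cone}). The point to stress is that there is no serious obstacle beyond Theorem \ref{th:cone}: the only additional input is the stable birational rigidity of curves, i.e.\ that $C\times\P^1$ remembers $C$. This is exactly what fails for higher-dimensional hyperplane sections --- a non-rational but stably rational $X$ produces a cone whose section is not rational although the cone may be Cremona equivalent to a hyperplane, as noted in the remark following Theorem \ref{th:cone} --- and it is precisely this failure that confines the full equivalence ``birational $\Leftrightarrow$ Cremona equivalent'' to the case $n=3$.
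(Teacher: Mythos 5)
Your proof is correct, and it is in fact more complete than the one in the paper. Both arguments share the same skeleton: a Cremona equivalence restricts to a birational map $S_1\map S_2$ (neither cone is contracted), each $S_i$ is birational to $C_i\times\P^1$ via projection from the vertex, and one then reads off $C_i$ from birational invariants of this ruled surface. The paper, however, stops at a numerical invariant: it observes that the irregularity of a resolution of $S_i$ equals $g(C_i)$, deduces $g(C_1)=g(C_2)$, and declares the proof finished. As written that only settles the statement for $g=0$ (and hence the ``rational cones'' assertion); for $g\geq 2$ equality of genera does not imply that the curves are birational, so one really does need what you supply --- that $C\times\P^1$ remembers $C$ itself, via the Albanese map or the maximal rationally connected fibration for $g\geq 1$, and L\"uroth for $g=0$. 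So your write-up uses the same underlying idea but replaces the paper's genus count with the full stable birational rigidity of curves, which is exactly the missing step; your closing observation that this rigidity is what fails in higher dimension, consistent with the remark after Theorem \ref{th:cone}, is also on point. One small stylistic note: the recovery of $C$ from the ruled surface can be phrased either through the MRC fibration or through the Albanese image; either is standard and both are birational invariants, so no further justification is needed.
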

\begin{proof}
  I have to prove that if $S_1$ and $S_2$ are Cremona equivalent then
  also $C_1$ and $C_2$ are birational. Note that the irregularity of a
  resolution of $S_i$ is a birational invariant and it is the
  geometric genus of the curve $C_i$. This yields $g(C_1)=g(C_2)$ and
  concludes the proof.
\end{proof}

\begin{remark}
As observed, in the special case of rational surfaces in $\P^3$ this gives the
Cremona equivalence to a plane for any rational cone.
In particular any rational cone has positive sup-threshold.
\end{remark}

It remains to translate the statement in $\sharp$-MMP dictionary for
conic bundles.

\begin{corollary}\label{cor:cb} Let $S\subset\P^3$ be a rational surface. Assume that there
  is:
  \begin{itemize}
  \item[a)] a $\sharp$-minimal model of the pair $(T,S_T)$ such that $T$ has
  a conic bundle structure $\pi:T\to W$  onto a rational surface $W$,
  $S_T=\pi^*C$, for a curve $C\subset W$,
\item [b)] a birational  map $\chi:T\rat \P^3$ that contracts $S_T$ to
  a curve,
  say $\Gamma$, such that $S_T$ is the valuation associated to $\I_\Gamma$. 
  \end{itemize}
Then $\overline{\rho}(T,S_T)=\overline\rho(\P^3,S)>0$.
Assumption b) is always satisfied if $\pi$ has a section, i.e. if $\pi$ is a scroll structure.
\end{corollary}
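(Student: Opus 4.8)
The plan is to reduce assertion (a)+(b) for the conic bundle pair $(T,S_T)$ to the cone case treated in Corollary \ref{cor:3}, and to verify the final sentence by producing assumption (b) from a section of $\pi$. First I would observe that by Lemma \ref{lem:psef} and the birational invariance of the geometric genus (as in the proof of Corollary \ref{cor:3}), all the relevant quantities attached to $(T,S_T)$ depend only on the birational class of the pair; so it suffices to exhibit \emph{one} good model whose threshold is positive. The key point is that assumption (b) gives a birational map $\chi:T\rat\P^3$ contracting $S_T$ to a curve $\Gamma$ with $S_T$ the valuation associated to $\I_\Gamma$. Reversing the construction in Proposition \ref{pro:con} — i.e. blowing up $\Gamma\subset\P^3$ and performing the elementary transformation of the resulting scroll in the opposite direction — produces a cone $S'\subset\P^3$ such that $(\P^3,S')$ is birational to $(T,S_T)$. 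Indeed the whole point of Proposition \ref{pro:con} is that the three operations $\epsilon,\eta,\f$ are reversible on a cone; running them backwards starting from the pair $(\P^3,\Gamma)$, where $\Gamma$ is a codimension-two subvariety, recovers a pair $(\P^n,S')$ with $S'$ a divisorial cone over a hyperplane section birational to $\Gamma$.

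Next I would apply Corollary \ref{cor:3}: the cone $S'\subset\P^3$ has a general hyperplane section birational to $\Gamma$, and since $S$ is rational and $(\P^3,S')$ is birational to $(T,S_T)$ which is birational to $(\P^3,S)$, the surface $S'$ is rational, hence so is its hyperplane section, hence $\Gamma$ is a rational curve. By Corollary \ref{cor:3} the cone $S'$ is Cremona equivalent to a plane, so $\overline{\rho}(\P^3,S')>0$ by Remark \ref{rem:pos}. But $(\P^3,S')$, $(\P^3,S)$ and $(T,S_T)$ are all birational pairs, and $\overline{\rho}$ is defined as a supremum over \emph{all} good birational models, hence is a birational invariant of the pair; therefore $\overline{\rho}(T,S_T)=\overline{\rho}(\P^3,S)=\overline{\rho}(\P^3,S')>0$. (Here one uses that $S$ is already a divisor in $\P^3$, so $(\P^3,S)$ is itself a good model, and that a $\sharp$-minimal model $(T,S_T)$ of $(\P^3,S)$ is birational to it by construction.)

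For the last sentence I would argue that when $\pi:T\to W$ is a scroll, i.e. admits a section $\sigma:W\to T$, assumption (b) is automatic. The standard construction here is: run the elementary transformation of the scroll centered along $\sigma(W)$ and along the strict transform of $S_T$ (exactly as in the diagram preceding Proposition \ref{pro:con}, where $\Gamma_Z$ played the role of the section), which blows down $S_T$ to a codimension-two subvariety $\Gamma$ of a new $\P^1$-bundle $V$ over $W$; since $W$ is rational and $V$ is then a rational $\P^1$-bundle over it, $V$ is rational, and a further sequence of elementary transformations and contractions of exceptional divisors — as invoked for the map $\f$ in the text — sends $(V,\Gamma)$ to $(\P^3,\Gamma')$ with $\Gamma'$ a curve and $S_T$ the valuation associated to $\I_{\Gamma'}$. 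This is precisely assumption (b), so the general case of the Corollary applies. I expect the main obstacle to be the careful bookkeeping in the reverse construction of Proposition \ref{pro:con}: one must check that blowing up $\Gamma\subset\P^3$ and running the elementary transformation backwards genuinely produces a divisorial \emph{cone} (equivalently, that the exceptional divisor $E$ one blows \emph{down} to get $\P^n$ is a $\P^{n-1}$-bundle over $\Gamma$ meeting the fibers of the reconstructed scroll in one point), rather than some more general divisor; granting Proposition \ref{pro:con} and its proof-method this is exactly the same computation run in the opposite direction, but it is the one step that requires genuine verification rather than invocation.
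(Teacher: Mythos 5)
Your overall architecture (reduce to a cone and invoke Corollary~\ref{cor:3}, then use birational invariance of $\overline\rho$) is reasonable, but the central step is not justified as stated, and it is precisely the step you yourself flag as needing ``genuine verification''. You claim that ``reversing the construction in Proposition~\ref{pro:con}'' --- blowing up $\Gamma\subset\P^3$ and running the elementary transformation of ``the resulting scroll'' backwards --- produces a cone $S'$ with $(\P^3,S')$ birational to $(T,S_T)$. But $\Gamma$ is an arbitrary rational space curve handed to you by hypothesis~b); it does not come from a cone construction, and $\Bl_\Gamma\P^3$ carries no scroll structure over a surface, so there is no elementary transformation to reverse. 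The paper closes exactly this gap not by reversing anything but by the ``extension trick'' of Theorem~\ref{th:cone}: $\Gamma$ is rational because it is dominated by the rational surface $S_T$ via $\chi$ (no need for your slightly circular detour through the not-yet-constructed $S'$); the main theorem of \cite{MP} then gives a Cremona transformation of $\P^3$ carrying $\Gamma$ to a line $\ell$ and restricting to an isomorphism at the generic point of $\Gamma$; Lemma~\ref{lem:curve} lifts it to a birational map of pairs $(\Bl_\Gamma\P^3,E_\Gamma)\rat(\Bl_\ell\P^3,E_\ell)$. Since $S_T$ is the valuation associated to $\I_\Gamma$ and a plane is the valuation associated to $\I_\ell$, this exhibits $(T,S_T)$ as birational to $(\P^3,H)$, whence $\overline\rho(T,S_T)=\overline\rho(\P^3,S)>0$ by Remark~\ref{rem:pos}. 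Your reduction could be repaired along the same lines (take any cone over a plane curve birational to $\Gamma$, apply Proposition~\ref{pro:con} to it, and connect the two codimension-two centres by \cite{MP} plus Lemma~\ref{lem:curve}), but the \cite{MP}-plus-Lemma~\ref{lem:curve} step is unavoidable and is entirely absent from your write-up.

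For the final assertion your sketch points in the right direction but skips a necessary reduction. After the elementary transformation along $S_T$ (which uses that a section forces all fibres of $\pi$ to be irreducible, so that $S_T$ can be blown down to a curve $\Gamma$), one obtains a scroll over $W$, where $W$ is an arbitrary rational surface; rationality of the total space $V$ alone does not yield a birational map to $\P^3$ that is an isomorphism near $\Gamma$ and realizes $S_T$ as the valuation of $\I_{\Gamma'}$. The paper first runs a $\sharp$-MMP on the base $W$, chosen to be an isomorphism in a neighbourhood of $\Gamma$, to reach $\P^2$ or a minimal ruled surface, and only then uses elementary transformations of the scroll structure to land in $\P^3$; that intermediate base reduction should appear explicitly in your argument.
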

\begin{proof}
  Let $(T,S_T)$ be a good model as in assumption a).
 By hypothesis there
is a birational map $\chi:T\rat \P^3$ that contracts $S_T$ onto a
curve $\Gamma$ and such that $S_T$ is the valuation associated to
$\I_\Gamma$.  The curve $\Gamma$ is dominated by a rational surface,
and it is therefore rational. The extension trick used in Theorem \ref{th:cone}
yields that $(T,S_T)$ is birationally equivalent to a plane in $\P^3$.
This is enough to prove that $\overline\rho(\P^3,S)>0$.

If $\pi$ has a section then all fibers are irreducible. Let
$\phi:T\rat Y$ be an elementary transformation that blows down
$S_T$ to a curve $\Gamma$. Then $Y$ has a scroll structure onto $W$ and I  may run
a $\sharp$-MMP on the base surface $W$, as described in
\cite[p. 700]{Me}, that is an isomorphism in a neighborhood of $\Gamma$.
This yields a new 3-fold model $Z$ with a Mori fiber space
structure onto either $\P^2$ or a ruled surface and then via
elementary transformation of the scroll structure I produce the
required map $\chi$.
\end{proof}
\begin{remark}
Unfortunately the birational geometry of rational conic bundles
without sections is
very poorly understood and it is difficult to understand whether
condition b) is always satisfied or not, even assuming the standard conjectures \cite{Isk}.
\end{remark}


\begin{thebibliography}{999999}

\bibitem[AM]{AM} Abhyankar, S.S., Moh, T.T.: \emph{Embeddings of the line in the
plane}. J. Reine Angew. Math. \textbf{276}, 148--166 (1975)

\bibitem[BCHM]{BCHMc} Birkar, C., Cascini, P., Hacon, C., McKernan, J.: \textit{Existence
of minimal models for varieties of log general type}.

\bibitem[BM]{BM} Bruno, A., Matsuki, K.: \textit{Log Sarkisov program}.
Internat. J. Math.  \textbf{8},  no. 4, 451--494  (1997)
\bibitem[CC]{CC} Calabri, A., Ciliberto
  C.: \textit{Birational classification
    of curves on rational surfaces}
 Nagoya Math. Journal {\bf 199}, 43--93 (2010)


\bibitem[Co]{Co} Coolidge, J.L.:
\textit{A treatise of algebraic plane curves}. Oxford Univ. Press.
Oxford (1928)

\bibitem[CM]{CM}
A.Corti and M. Mella, \textsl{Birational
  geometry of terminal quartic 3-folds
  I}, American Journ. of Math. {\bf 126}
(2004), 739-761.

\bibitem[FLMN]{BB} Fernandez de
Bobadilla, J., Luengo, I., Melle-Hernandez, A., Nemethi, A.:
\textit{On rational cuspidal curves, open surfaces and local
singularities}. Singularity theory, Dedicated to Jean-Paul
Brasselet on His 60th Birthday, Proceedings of the 2005 Marseille
Sin\-gu\-la\-ri\-ty School and Conference, 2007, 411-442.
arXiv:math/0604421v1

\bibitem[Ii]{Ii} Iitaka, S.: \textit{Birational geometry of plane curves}.
Tokyo J. Math. {\bf 22},  no. 2, 289--321  (1999)

\bibitem[Is]{Isk} Iskovskikh, V.A.: \emph{On the rationality problem for algebraic
threefolds}. (Russian) Tr. Mat. Inst. Steklova  218  (1997), Anal.
Teor. Chisel i Prilozh., 190--232;  translation in Proc. Steklov
Inst. Math.  1997, no. 3 (218), 186--227

\bibitem[Je]{Je} Jelonek, Z.: \emph{The extension of regular and rational
embeddings}. Math. Ann. \textbf{277},  113--120 (1987)

\bibitem[Ju]{Ju} Jung, G.:
  \textit{Ricerche sui sistemi lineari
    di genere qualunque e sulla loro
    riduzione all'ordine minimo}. Annali
  di Mat. (2) {\bf 16} (1888), 291--327.
 
\bibitem[KM]{KM} Kumar, N.M., Murthy, M.P.: \textit{Curves with negative self intersection on rational
surfaces}. J. Math. Kyoto Univ. {\bf 22}, 767--777 (1983)





\bibitem[Me]{Me} Mella, M.: \textit{$\#$-Minimal Model of uniruled
3-folds}. Mat. Zeit. {\bf 242}, 187--207  (2002)

\bibitem[MP]{MP} Mella, M., Polastri, E. :\emph{Equivalent Birational Embeddings} Bull. London
  Math. Soc.  {\bf 41}, N.1 89--93 (2009)
\bibitem[MP2]{MP2} Mella, M., Polastri, E. :\emph{Equivalent
    Birational Embeddings II: divisors}  Mat. Zeit. {\bf 270}, Numbers 3-4 (2012), 1141-1161, DOI: 10.1007/s00209-011-0845-3 
\bibitem[Na]{Na} Nagata, M.: \textit{On
rational surfaces. I. Irreducible curves of arithmetic genus $0$
or $1$}. Mem. Coll. Sci. Univ. Kyoto Ser. A Math.  {\bf 32},
351--370 (1960)


\bibitem[SR]{SR} Semple, J.G., Roth, L.: \emph{Introduction to Algebraic
Geometry}. Clarendon Press, Oxford (1949)


\end{thebibliography}
\end{document}